\newtheorem{thm}{Theorem}[section]
\newtheorem{prob}[thm]{Problem}
\setlist[enumerate,itemize]{topsep=4pt, parsep=3pt, itemsep=3pt, 
leftmargin=0.5cm, 
listparindent=20pt, labelsep=5pt, itemindent=0pt, partopsep=5pt, label=\roman*)}
\makeatletter \@addtoreset{equation}{section} \makeatother
\def\={\;=\;}
\def\lle{\;\le\;}
\def\Z{\mathbb{Z}}
\def\N{\mathbb{N}}
\crefname{case}{Case}{Cases}
\crefname{case}{Case}{Cases}
\crefname{casethm}{Case}{Cases}
\crefname{casethm}{Case}{Cases}
\crefname{cond}{Condition}{Conditions}
\Crefname{cond}{Condition}{Conditions}
\crefname{def}{Definition}{Definitions}
\Crefname{def}{Definition}{Definitions}
\crefname{ineq}{Ineq.}{Ineqs.}
\Crefname{ineq}{Inequality}{Inequalities}
\crefname{prob}{Problem}{Problems}
\Crefname{prob}{Problem}{Problems}
\crefname{rl}{Relation}{Relations}
\crefname{rl}{Relation}{Relations}
\crefname{thm}{Theorem}{Theorems}
\Crefname{thm}{Theorem}{Theorems}
\begin{document}

\title[]{The Tutte's condition in terms of graph factors}

\author[H. Lu]{Hongliang Lu}
\address{School of Mathematics and Statistics, Xi'an Jiaotong university, 710049 Xi'an, P.\ R.\ China}
\email{luhongliang@mail.xjtu.edu.cn}
\thanks{Lu is supported by the National Natural Science Foundation of China, No.\ 11471257.}

\author[D.G.L. Wang]{David G.L. Wang$^{\dag\ddag}$}
\address{
$^\dag$School of Mathematics and Statistics, Beijing Institute of Technology, 102488 Beijing, P.\ R.\ China\\
$^\ddag$Beijing Key Laboratory on MCAACI, Beijing Institute of Technology, 102488 Beijing, P.\ R.\ China}
\email{glw@bit.edu.cn}
\thanks{Wang is supported by the National Natural Science Foundation of China, No.\ 11671037}

\keywords{graph factor, perfect matching, Tutte's condition, Tutte's theorem}
\subjclass[2010]{05C75 05C70}
%05C75 Structural characterization of families of graphs

\begin{abstract}
Let $G$ be a connected general graph of even order, with a function $f\colon V(G)\to\Z^+$.
We obtain that $G$ satisfies the Tutte's condition
\[
o(G-S)\le \sum_{v\in S}f(v)\qquad\text{for any nonempty set $S\subset V(G)$},
\]
with respect to $f$ 
if and only if $G$ contains an $H$-factor for any function $H\colon V(G)\to 2^\N$ such that 
$H(v)\in \{J_f(v),\,J_f^+(v)\}$ for each $v\in V(G)$, where 
the set $J_f(v)$ consists of the integer $f(v)$
and all positive odd integers less than $f(v)$,
and the set $J^+_f(v)$ consists of positive odd integers less than or equal to $f(v)+1$.
We also obtain a characterization for graphs of odd order satisfying the Tutte's condition with
respect to a function.
%$H\in \mathcal{H}_f$, where
%\[
%\mathcal{H}_f=\bigl\{H\colon V(G)\to 2^\N\ \big|\ H(v)\in \{J_f(v),\,J_f^+(v)\}\text{ for each $v\in V(G)$}\bigr\}.
%\]
%This is a new characterization on the  problem posed by Akiyama and Kano,
%as well as a problem of Cui and Kano's.
\end{abstract}

\maketitle

\section{Introduction}\label{sec:introduction}

This note connects Tutte's condition with graph factors.
Tutte's theorem states that a graph $G$ has a perfect matching if and only if
\[
o(G-S)\lle |S|\qquad\text{for any set $S\subset V(G)$},
\]
where $o(G-S)$ denotes the number of odd components of the subgraph $G-S$, 
and $V(G)$ is the vertex set of $G$. 
Let $f\colon V(G)\to \Z^+$ be a function,
where $\Z^+$ denotes the set of positive integers.
The \emph{Tutte's condition on $G$ with respect to $f$} is the condition
\[
o(G-S)\le f(S)\qquad\text{for any nonempty set $S\subset V(G)$},
\]
where $f(S)=\sum_{v\in S}f(v)$.
The Tutte's condition with respect to the constant function $f\equiv 1$ is the condition in Tutte's theorem.

A considerable large number of literatures on graph factors can be found
in Akiyama and Kano's book~\cite{AK11B}.
Let $H\colon V(G)\to 2^\N$ be a set-valued function.
A spanning subgraph $F$ of $G$ is called an \emph{$H$-factor} if $\deg_F(v)\in H(v)$.
%It is called a \emph{$(1,f)$-odd factor} if it is an $H$-factor where $H(v)=\{1,3,5,\ldots,f(v)\}$,
%and $f(v)$ is odd for each $v$.
In particular, a 1-factor is exactly a perfect matching. 
For any vertex $x$ of $G$, denote by $G^x$ the graph obtained from $G$
by adding a new vertex $x'$ together with a new edge $xx'$.
%, that is, $G^x=G+xx'$. 
A graph $G$ is said to be {\em $H$-critical}
if $G$ contains no $H$-factors and if the graph $G^x$ has an $H^x$-factor for every vertex $x$ of~$G$,
where
\[
H^x(v)=\begin{cases}
\{1\}, &\text{if $v=x'$};\\[5pt]
H(v), &\text{otherwise}.
\end{cases} 
\]

Lov\'asz~\cite{Lov69} proposed the {\em degree prescribed subgraph problem}
of determining the distance of a factor from a given integer set function.
He~\cite{Lov72} considered it with the restriction that the given set function $H$ is allowed,
i.e., that every gap of the set $H(v)$ for each vertex $v$ is at most two.
He also showed that the problem is NP-complete when the function $H$ is not allowed.
Cornu\'ejols~\cite{Cor88} provided a polynomial Edmonds-Johnson type alternating forest algorithm
for the degree prescribed subgraph problem with~$H$ allowed,
which implies a Gallai-Edmonds type structure theorem.

For convenience, we denote the set of positive odd integers by $2\N+1$, and 
\[
J_n
\=\begin{cases}
\{1,3,5,\ldots,n\},&\text{if $n$ is odd};\\[4pt]
\{1,3,5,\ldots,n-1,n\},&\text{if $n$ is even}.
\end{cases}
\]
Define $J_f(v)=J_{f(v)}$ for all vertices $v$.
%Under this notation, $J_{f}$-factors are exactly $(1,f)$-odd factors when $f(v)$ is odd for each vertex $v$.
%Amahashi~\cite{Ama85} gave a Tutte-type characterization for graphs having an odd factor.
%\begin{thm}[Amahashi, \cite{Ama85}]\label{thm:Amahashi}
%Let $n\ge 2$ be an integer.
%A general graph $G$ has a $J_{2n-1}$-factor if and only if
%\begin{equation}\label[cond]{cond:Amahashi}
%o(G-S)\le(2n-1)|S|\qquad\text{for all vertex subsets $S$}.
%\end{equation}
%\end{thm}
%
%By generalizing the constant odd integer $2n-1$ to an odd-valued function $f(v)$,
%Cui and Kano~\cite{CK88} obtained \cref{thm:CK}.
%Let $f\colon V(G)\to 2\N+1$ be a function.

\begin{thm}[Cui and Kano~\cite{CK88}]\label{thm:CK}
A connected general graph $G$ of even order satisfies the Tutte's condition 
with respect to a function $f\colon V(G)\to 2\N+1$
if and only if $G$ contains a $J_f$-factor.
\end{thm}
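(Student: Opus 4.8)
The easy implication ``$G$ has a $J_f$-factor $\Rightarrow$ $G$ satisfies Tutte's condition with respect to $f$'' I would get by a parity count. Let $F$ be a $J_f$-factor and $\emptyset\neq S\subset V(G)$; every vertex has odd degree in $F$ because $J_f(v)\subseteq 2\N+1$. For an odd component $C$ of $G-S$, the integer $\sum_{v\in C}\deg_F(v)$ is a sum of $|C|$ odd numbers with $|C|$ odd, hence odd; since it equals $2\,|E(F[C])|+e_F(C,S)$, where $e_F(C,S)$ is the number of $F$-edges between $C$ and $S$ and $C$ sends no $F$-edge to the rest of $G-S$, the number $e_F(C,S)$ is odd and so at least $1$. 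Summing over the $o(G-S)$ odd components, at least $o(G-S)$ edges of $F$ meet $S$, whereas at most $\sum_{v\in S}\deg_F(v)\le f(S)$ of them do; hence $o(G-S)\le f(S)$. (A loop at a vertex of $C$ adds $2$ to both sides of the parity identity, so loops are harmless.)

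For the converse I would reduce to Tutte's $1$-factor theorem via a Tutte-type gadget. For $v\in V(G)$ let $d_v$ be its degree (a loop counted twice) and $d_v'$ the largest odd integer $\le d_v$; set $t_v=d_v-\min\{f(v),d_v'\}$ and $z_v=d_v-1-t_v\ge 0$, and let $\Gamma_v$ consist of $d_v$ \emph{terminals} (one attached to each edge-end at $v$), $t_v$ \emph{anchors}, and a clique $K_{z_v}$, where every anchor and every clique-vertex is joined to all terminals of $\Gamma_v$. Build $G^\ast$ from the disjoint $\Gamma_v$ by joining, for each edge $e=uv$ of $G$, the terminal of $\Gamma_v$ at $e$ to the terminal of $\Gamma_u$ at $e$; a loop at $v$ contributes two terminals inside $\Gamma_v$, joined there. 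A short case analysis of the matchings of $\Gamma_v$ shows that, for a set $K$ of its terminals, $\Gamma_v-K$ has a perfect matching if and only if $t_v\le d_v-|K|\le t_v+z_v$ and $d_v-|K|\equiv t_v+z_v\pmod 2$, which (using $t_v+z_v=d_v-1$) is the same as saying $|K|$ is odd and $|K|\le f(v)$. Hence $G^\ast$, which is connected of even order, has a perfect matching if and only if $G$ has a $J_f$-factor — namely the set of edges whose two terminals are matched to each other. By Tutte's theorem it now suffices to show that the Tutte conditions transfer, and the substantive half is: \emph{if $o(G-S)>f(S)$ for some $S\subset V(G)$, then $G^\ast$ has a barrier $S^\ast$ with $o(G^\ast-S^\ast)>|S^\ast|$.}

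That last step is where the effort goes. The natural attempt is to take $S^\ast=\bigcup_{v\in S}V(\Gamma_v)$; the components of $G^\ast-S^\ast$ are then the blow-ups $\bigcup_{v\in C}V(\Gamma_v)$ of the components $C$ of $G-S$, each decorated with $e_G(C,S)$ dangling terminals, hence of order $\sum_{v\in C}(2d_v-1)+e_G(C,S)\equiv|C|+e_G(C,S)\pmod 2$. Since an odd $C$ need not produce an odd-order piece, one must correct the parity by absorbing or releasing dangling terminals (and, for an even $C$, perhaps removing one more vertex of some $\Gamma_v$ with $v\in C$), and must verify that an optimal barrier may be chosen in this normalized form; this dangling-terminal bookkeeping is the main obstacle, but it follows the standard template for deriving degree-constrained factor theorems from Tutte's theorem (cf.\ \cite{AK11B}). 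A fully self-contained alternative is induction on the pair $(|V(G)|,f(V(G)))$ ordered lexicographically, with base case $f\equiv 1$ (Tutte's theorem): either some $f(v_0)\ge 3$ can be replaced by $f(v_0)-2$ while keeping the Tutte condition, and then induction finishes because $J_{f'}(v)\subseteq J_f(v)$ pointwise; or a set $S\ni v_0$ with $o(G-S)\in\{f(S)-1,f(S)\}$ exists, along which $G$ decomposes once a Hall/flow argument distributes the odd components of $G-S$ among the vertices of $S$ within their capacities $f(\cdot)$ — and here the obstruction is the parity bookkeeping at the attachment vertices (which nudges one toward proving a ``parity $J_f$-factor'' strengthening) together with the exceptional subcase $o(G-S)=f(S)-1$.
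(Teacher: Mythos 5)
The paper does not actually prove \cref{thm:CK}: it is quoted from Cui and Kano, and within the paper's logic it only reappears as the special case $f(V(G))\subseteq 2\N+1$ of \cref{thm:rTutte:new}, whose nontrivial half is in turn delegated to \cref{thm:rTutte} from \cite{LW17}. Measured against that, your first implication (``$G$ has a $J_f$-factor $\Rightarrow$ the Tutte condition holds'') is correct and is essentially the same parity computation the paper uses in the sufficiency part of \cref{thm:rTutte:new}: each odd component of $G-S$ sends an odd, hence positive, number of $F$-edges into $S$, and $S$ can absorb at most $f(S)$ of them. Your gadget $\Gamma_v$ is also sound: since $f(v)$ is odd, the identity $t_v+z_v=d_v-1$ gives exactly that $\Gamma_v-K$ has a perfect matching iff $|K|$ is odd and $|K|\le f(v)$, so $G^\ast$ has a perfect matching iff $G$ has a $J_f$-factor.

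The genuine gap is in the converse, and it is twofold. First, your transfer lemma points the wrong way. What you call ``the substantive half'' --- from a set $S$ with $o(G-S)>f(S)$ construct a violating $S^\ast$ in $G^\ast$ --- is the contrapositive of ``$G^\ast$ has a perfect matching $\Rightarrow$ the Tutte condition holds for $(G,f)$'', i.e.\ it merely re-proves the easy implication you already established by parity. The implication needed for ``Tutte condition $\Rightarrow$ $J_f$-factor'' is the reverse one: given an \emph{arbitrary} $S^\ast\subseteq V(G^\ast)$ with $o(G^\ast-S^\ast)>|S^\ast|$, produce $S\subseteq V(G)$ with $o(G-S)>f(S)$. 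That forces you to normalize an arbitrary Tutte set of $G^\ast$ (showing a maximally violating $S^\ast$ may be assumed to consist of whole anchor-and-clique blocks together with controlled sets of terminals) before any $S$ can be read off; your sketch, which starts from a given $S$ and blows it up, never touches this direction. Second, even on its own terms the argument is not completed: you explicitly leave the dangling-terminal bookkeeping and the parity corrections as ``the main obstacle'', and the alternative induction on $(|V(G)|,f(V(G)))$ is likewise abandoned at its key subcase ($o(G-S)=f(S)-1$ and the redistribution of odd components among the vertices of $S$). As it stands, the hard direction is a plan rather than a proof.
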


Extending the range of $f$ to be all positive integers,
Egawa, Kano, and Yan~\cite{EKY16} obtain \cref{thm:EKY}.

\begin{thm}[Egawa et al.~\cite{EKY16}]\label{thm:EKY}
Suppose that a connected simple graph $G$ of even order satisfies the Tutte's condition 
with respect to a function $f\colon V(G)\to\Z^+$.
Then~$G$ contains a $J_f$-factor.
\end{thm}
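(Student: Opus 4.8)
The plan is to deduce \cref{thm:EKY} from \cref{thm:CK} by peeling off, one at a time, the vertices at which $f$ is even. Put $E_f=\{v\in V(G): f(v)\text{ is even}\}$ and induct on the pair $(|V(G)|,|E_f|)$ ordered lexicographically. If $E_f=\emptyset$, then $f$ takes only odd values, and since a simple graph is in particular a general graph, \cref{thm:CK} gives a $J_f$-factor of $G$. Otherwise fix $u\in E_f$ and let $f'$ agree with $f$ except that $f'(u)=f(u)-1$, a positive odd integer as $f(u)\ge 2$. Then $|E_{f'}|=|E_f|-1$, and $J_{f'}(v)\subseteq J_f(v)$ for every $v$ since $J_{f(u)}=J_{f(u)-1}\cup\{f(u)\}$. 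If $G$ still satisfies the Tutte's condition with respect to $f'$, the induction hypothesis supplies a $J_{f'}$-factor of $G$, which is already a $J_f$-factor, and we are done.

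Otherwise some nonempty $S\subset V(G)$ satisfies $o(G-S)>f'(S)$. Since $f(S)\ge o(G-S)>f'(S)\ge f(S)-1$, necessarily $u\in S$ and $o(G-S)=f(S)$; among all such sets I would take an $S$ that is maximal under inclusion. Note $f(S)=o(G-S)\ge f(u)\ge 2$, so $G-S$ has at least two odd components; denote them $C_1,\dots,C_q$ with $q=f(S)$, and the even components $D_1,\dots,D_r$. As $G$ is connected, each $C_i$ and each $D_j$ has a neighbour in $S$.

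The core step is to split $G$ along $S$. Form the bipartite multigraph $B$ with parts $S$ and $\{C_1,\dots,C_q\}$, joining $v$ and $C_i$ once for each edge of $G$ between them. For $U\subseteq S$, every $C_i$ whose $S$-neighbourhood lies in $U$ is an odd component of $G-U$, so there are at most $o(G-U)\le f(U)$ of them (and none when $U=\emptyset$, by connectedness); consequently $|\mathcal D|\le f(N_B(\mathcal D))$ for each $\mathcal D\subseteq\{C_1,\dots,C_q\}$, and as $\sum_{v\in S}f(v)=q$, the deficiency form of Hall's theorem produces a spanning subgraph $M$ of $B$ with $\deg_M(C_i)=1$ for all $i$ and $\deg_M(v)=f(v)$ for all $v\in S$. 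Realise $M$ by actual edges of $G$, choosing for each $i$ an edge $v_ic_i$ with $v_i\in S$ and $c_i\in C_i$. I would then build the $J_f$-factor $F$ as the union of these $q$ edges, a $J_f$-factor of each $D_j$, and a $J_f^{\,c_i}$-factor of $C_i^{\,c_i}$ for each $i$ (with the pendant edge at $c_i$ realised by $v_ic_i$); one checks $\deg_F(v)=f(v)\in J_f(v)$ for $v\in S$ and $\deg_F(x)\in J_f(x)$ elsewhere. Each such sub-instance is connected, of even order, and (using $q\ge 2$) strictly smaller than $G$, so by induction it suffices to verify that each inherits the Tutte's condition. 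For $D_j$ this is immediate: for nonempty $A\subseteq D_j$ we have $q+o(D_j-A)=o(G-(S\cup A))\le f(S\cup A)=q+f(A)$, so $o(D_j-A)\le f(A)$.

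The main obstacle is the same verification for the pendant gadgets $C_i^{\,c_i}$: one must be able to choose the attachment vertex $c_i\in N_G(v_i)\cap C_i$ so that $C_i^{\,c_i}$ satisfies the Tutte's condition (with value $1$ at the added vertex $c_i'$). A nonempty test set $A$ containing $c_i'$ is harmless, as the computation reduces to $o(C_i-(A\setminus\{c_i'\}))\le f(A\setminus\{c_i'\})+1$, which follows as above; the delicate case is $A\subseteq C_i$, where the analogous bound $o(C_i-A)\le f(A)+1$ need not suffice once $c_i$ is swallowed into $A$, or lies in an even part of $C_i-A$. Maximality of $S$ excludes the genuinely tight configurations (enlarging $S$ by such an $A$ would contradict it), and I expect the residual borderline cases to be dispatched by choosing $c_i$ more carefully among the neighbours of $v_i$ in $C_i$---presumably this is the point at which the hypothesis that $G$ is simple is used. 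Making this precise is the crux; everything else is routine manipulation of the Tutte's condition together with \cref{thm:CK}. (Alternatively, one could try to specialise the Gallai--Edmonds--Cornu\'ejols structure theorem for allowed set functions to $H=J_f$.)
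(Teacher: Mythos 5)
First, a remark on the point of comparison: the paper does not prove \cref{thm:EKY} at all --- it is quoted from Egawa, Kano and Yan, and within this paper it would follow in one line from \cref{thm:rTutte} applied with the all-red colouring $g^{-1}(R)=V(G)$, which is exactly the step taken in the necessity half of the proof of \cref{thm:rTutte:new}. So your proposal is being measured against the literature rather than against an argument given here. Your plan --- reduce to \cref{thm:CK} by induction, locate a tight set $S\ni u$ with $o(G-S)=f(S)$, distribute the odd components to $S$ by a defect--Hall argument, and recurse into the components --- is structurally reasonable, and most of the individual steps do check out: the identification of $S$, the inequality $|\mathcal D|\le f(N_B(\mathcal D))$ and the resulting semi-matching $M$ with $\deg_M(v)=f(v)$ on $S$, and the verification of the Tutte's condition for the even components $D_j$ are all correct.

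However, the proof is not complete, and the missing piece is precisely where the substance of the theorem lies. For a nonempty $A\subseteq V(C_i)$, the Tutte's condition on $G$ gives only $o(C_i-A)\le f(A)+1$; your maximality of $S$ does rule out equality (since $S\cup A$ would again be tight and contain $u$), leaving $o(C_i-A)\le f(A)$. But the pendant graph $C_i^{c_i}$ needs $o(C_i-A)\le f(A)-1$ whenever $c_i\in A$ or $c_i$ lies in an even component of $C_i-A$, so you are still one short exactly in the cases you call delicate. You defer these to ``choosing $c_i$ more carefully among the neighbours of $v_i$'', but you give no argument that a single choice of $c_i$ (or of the pair $v_i,c_i$, whose freedom is already constrained by the matching $M$) simultaneously avoids every such tight set $A$; when several tight sets overlap this requires a genuine structural analysis (a lattice of tight sets, or a Gallai--Edmonds/Cornu\'ejols decomposition), and this is where the published proofs of \cref{thm:CK}, \cref{thm:EKY} and \cref{thm:rTutte} spend most of their effort. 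As stated, this step is an acknowledged gap, not a proof. A minor additional point: simplicity of $G$ is unlikely to be the missing ingredient, since \cref{thm:rTutte} holds for general graphs and hence so does the conclusion of \cref{thm:EKY}.
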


The particular case $f(v)\equiv 2n$ for some integer $n$ had been solved by Akiyama, Avis and Era~\cite{AAE80} 
for $n=1$ and by the present authors~\cite{LW13} for $n\ge2$.

%\DWr{\Cref{cond:f} is considered as a \emph{restricted Tutte type condition},
%for the order of $G$ satisfying it has to be even,
%which can be seen from taking $S=\emptyset$ in \cref{cond:f}.}
%By setting $S=\emptyset$, we see that \cref{prob:CK} involves graphs of even orders only.
%Taking account of graphs of odd order,
Without restricting the parity of order of $G$,
Akiyama and Kano \cite[Problem 6.14 (2)]{AK11B} proposed \cref{prob:AK}.
Denote by $2\Z^+$ the set of positive even integers.
\begin{prob}[Akiyama and Kano~\cite{AK11B}]\label{prob:AK}
Suppose that a connected simple graph~$G$ satisfies the Tutte's condition 
with respect to a function $f\colon V(G)\to 2\Z^+$.
%\begin{equation}\label[cond]{cond:AK}
%o(G-S)\lle f(S)\qquad\text{for any nonempty set $S\subset V(G)$},
%\end{equation}
Then what factor or property does $G$ have?
\end{prob}

In the next section, 
we will give a characterization of graphs satisfying the Tutte's condition with respect to a function $f$,
in terms of graph factors, without any restriction on the range of $f$,
and for graphs of any parity of order.

\section{Main Result}
In terms of graph factors, 
the authors~\cite{LW17} have characterized graphs
satisfying the Tutte's condition with respect to a function,
%depending on the parity of order of $G$,
but with the aid of 
either $2$-colorings, or $2$-edge-colorings, or $2$-end-colorings; see \cref{thm:rTutte,thm:Tutte}.
In this note, we present characterizations in terms of graph factors only;
see \cref{thm:rTutte:new,thm:Tutte:new}.

%Let $G$ be a general graph allowing both loops and parallel edges,
%with vertex set $V(G)$ and edge set $E(G)$.
%We call a function $g\colon V(G)\rightarrow \{B,R\}$ a \emph{blue-red coloring} of $G$. 
%Let
%\[
%\mathcal{C}=\{g:V(G)\rightarrow \{B,R\}\}.
%\]
%For $g\in \mathcal{C}$, let $g_B=\{v\in V(G)\ |\ g(v)=B\}$ and $g_R=\{v\in V(G)\ |\ g(v)=R\}$.

\begin{thm}[Lu and Wang~\cite{LW17}]\label{thm:rTutte}
A connected general graph $G$ of even order 
satisfies the Tutte's condition with respect to a function $f\colon V(G)\to\Z^+$
if and only if
$G$ contains an $H$-factor for any coloring $g\colon V(G)\rightarrow \{B,R\}$, where 
\[
H(v)=\begin{cases}
J_f(v),&\text{if $g(v)=R$};\\[5pt]
2\N+1,&\text{if $g(v)=B$}.
\end{cases}
\]
%\[
%o(G-S)\le f(S)\qquad\text{for all vertex subsets $S$},
%\]
%if and only if for any $g\in \mathcal{C}$, $G$ contains an $H_g$-factor, where $H_g(v)=J_f(v)$ for all $v\in g_R$ and $H_g(v)=2N+1$ for all $v\in g_B$.
\end{thm}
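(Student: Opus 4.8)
The plan is to prove the two directions of \cref{thm:rTutte} separately. The implication ``factor property $\Rightarrow$ Tutte's condition'' I would establish by contraposition, exhibiting one bad coloring; the converse I would first reduce, by recoloring the blue vertices red one at a time, to the all-red case, namely the assertion that a connected general graph of even order which satisfies the Tutte's condition with respect to $f$ contains a $J_f$-factor.

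For the first direction, suppose $o(G-S)>f(S)$ for some nonempty $S\subseteq V(G)$, and take the coloring $g$ with $g(v)=R$ for $v\in S$ and $g(v)=B$ otherwise. If $F$ were an $H$-factor, then in each odd component $C$ of $G-S$ every vertex is blue, so every $\deg_F(v)$ is odd; since $|C|$ is odd, $\sum_{v\in C}\deg_F(v)$ is odd, and since every $G$-edge leaving $C$ ends in $S$, this forces $e_F(C,S)=\sum_{v\in C}\deg_F(v)-2e_F(C)$ to be odd, hence positive. As these edge sets are disjoint over the $o(G-S)$ odd components, at least $o(G-S)$ edges of $F$ meet $S$; but their number is also at most $\sum_{v\in S}\deg_F(v)\le\sum_{v\in S}\max J_f(v)=f(S)$, a contradiction. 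Thus $G$ has no $H$-factor for this $g$, which is what contraposition needs.

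For the converse, assume $G$ satisfies the Tutte's condition with respect to $f$ and fix a coloring $g$. Let $v_0$ be blue and let $d_0$ be the largest odd integer not exceeding $\deg_G(v_0)$. Since $\deg_F(v_0)\le\deg_G(v_0)$ for every spanning subgraph $F$ and $d_0$ is odd, the constraints $\deg_F(v_0)\in 2\N+1$ and $\deg_F(v_0)\in J_{d_0}$ are equivalent; hence recoloring $v_0$ red and resetting the value $f(v_0)$ to $d_0$ does not change which subgraphs are $H$-factors. I claim $G$ still satisfies the Tutte's condition with respect to the modified function $f'$. This is clear for sets avoiding $v_0$; for $S=\{v_0\}$ one uses that $o(G-v_0)$ is odd (as $G$ has even order) and is at most $\deg_G(v_0)$, hence at most $d_0=f'(S)$; and for $S$ with $v_0\in S$ and $|S|\ge2$, writing $C_0$ for the component of $G-(S\setminus\{v_0\})$ that contains $v_0$, one has $o(C_0-v_0)\le\deg_G(v_0)$ with the parity of $|C_0|-1$, and a short check in the two cases ``$|C_0|$ even'' and ``$|C_0|$ odd'' gives $o(G-S)\le o(G-(S\setminus\{v_0\}))+d_0\le f(S\setminus\{v_0\})+d_0=f'(S)$, the middle inequality being the Tutte's condition for the nonempty set $S\setminus\{v_0\}$. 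Iterating over all blue vertices---the graph $G$ itself is never altered---reduces the converse to the all-red assertion.

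That all-red assertion is where I expect the real work. For $f$ odd-valued it is Cui and Kano's \cref{thm:CK}, and for simple $G$ it is \cref{thm:EKY}; the genuinely remaining case is an even value of $f$ in a graph with multiple edges. Here one should not expect the tidy equivalence ``Tutte's condition $\Leftrightarrow$ $J_f$-factor'', since $J_f(v)$ then contains the even integer $f(v)$ and so a $J_f$-factor can exist while the Tutte's condition fails (for instance, a vertex joined to three disjoint triangles, with $f\equiv 2$), making the $J_f$-factor only the weaker conclusion; and one cannot fold the even values into an odd-valued instance by gadgets, because a gadget built from vertices with odd prescribed degree sets always contributes a fixed parity at the vertex to which it is attached. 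I would instead invoke the deficiency (Gallai--Edmonds type) form of the general-factor theorem for allowed set-valued functions, in the spirit of Lov\'asz and Cornu\'ejols: if $G$ has no $J_f$-factor, this yields disjoint sets $S$ and $T$ together with the ``odd'' components of $G-S-T$ certifying a negative deficiency, and the task is to absorb $T$ into $S$ and to account for the even endpoints of the sets $J_f(v)$ so as to distil a nonempty $S^\ast$ with $o(G-S^\ast)>f(S^\ast)$, contradicting the hypothesis. Carrying out that translation precisely---with multiple edges and these even thresholds both present---is the step I expect to be the main obstacle.
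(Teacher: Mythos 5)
First, note that the paper does not actually prove \cref{thm:rTutte}: it is imported verbatim from the authors' earlier work \cite{LW17}, so there is no in-paper proof to compare yours against. Judged on its own terms, your proposal handles the easy direction correctly: the coloring that is red exactly on $S$, the parity count showing each odd component of $G-S$ sends an odd (hence positive) number of $F$-edges into $S$, and the bound $\sum_{v\in S}\deg_F(v)\le f(S)$ together give the contrapositive. Your reduction of the converse to the all-red case is also correct and worth keeping: replacing the constraint $2\N+1$ at a blue vertex $v_0$ by $J_{d_0}$, with $d_0$ the largest odd integer not exceeding $\deg_G(v_0)$, preserves the set of $H$-factors, and your verification that the modified function still satisfies the Tutte's condition (via $o(C_0-v_0)\le\deg_G(v_0)$, the congruence $o(C_0-v_0)\equiv|C_0|-1 \pmod 2$, and the Tutte's condition applied to the nonempty set $S\setminus\{v_0\}$) checks out in all cases.

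The proposal then stops exactly where the theorem lives. The all-red assertion --- a connected general graph of even order satisfying the Tutte's condition with respect to an arbitrary $f\colon V(G)\to\Z^+$ contains a $J_f$-factor --- is not covered by \cref{thm:CK} (which requires $f$ to be odd-valued) nor by \cref{thm:EKY} (which requires $G$ to be simple), and your recoloring step does nothing to remove the even values of $f$ at the originally red vertices. You correctly diagnose why the cheap routes fail (the set $J_{f(v)}$ for even $f(v)$ has a gap of size one at its top, and parity gadgets cannot simulate such a set), but the promised derivation from the Lov\'asz--Cornu\'ejols deficiency theory remains a plan rather than an argument: you never produce the nonempty set $S^\ast$ with $o(G-S^\ast)>f(S^\ast)$, and absorbing the second set $T$ of the structure theorem into $S$ while accounting for those even endpoints is precisely the nontrivial content of \cite{LW17}. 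As it stands, the proposal is a correct pair of reductions wrapped around an unproven core, so it has a genuine gap.
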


For any function $f\colon V(G)\to\Z^+$,
let $J_f^+(v)$ be the set of positive odd integers
that are less than or equal to $f(v)+1$. In other words,
\begin{align*}
J_f^+(v)
=\{m\in 2\N+1\colon m\le f(v)+1\}
=\begin{cases}
J_{f(v)},&\text{if $f(v)$ is odd};\\[4pt]
J_{f(v)+1},&\text{if $f(v)$ is even}.
\end{cases}
%&=\begin{cases}
%\{1,3,5,\ldots,f(v)\},&\text{if $f(v)$ is odd}\\[4pt]
%\{1,3,5,\ldots,f(v)-1,f(v)+1\},&\text{if $f(v)$ is even}.
%\end{cases}
\end{align*}
Define a set 
\[
\mathcal{H}_f=\bigl\{H\colon V(G)\to 2^\N\ \big|\ H(v)\in \{J_f(v),\,J_f^+(v)\}\text{ for each $v\in V(G)$}\bigr\}.
\]

\bigskip

\begin{thm}\label{thm:rTutte:new}
A connected general graph $G$ of even order satisfies the Tutte's condition 
with respect to a function $f\colon V(G)\to\Z^+$
%\[
%o(G-S)\le f(S)\qquad\text{for any set $S\subset V(G)$}.
%\]
if and only if $G$ contains an $H$-factor for any $H\in \mathcal{H}_f$. 
\end{thm}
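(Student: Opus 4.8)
The plan is to derive both directions of the equivalence from \cref{thm:rTutte}, by matching the sets $J_f(v)$ and $J_f^+(v)$ to the roles played there by the colors $R$ and $B$. Two elementary observations do all the work. First, $J_f^+(v)$ consists only of odd integers, so $J_f^+(v)\subseteq 2\N+1$ for every vertex $v$; consequently any $H$-factor with $H\in\mathcal{H}_f$ is, in particular, a factor of the kind that \cref{thm:rTutte} attaches to a suitable coloring. Second, every $H\in\mathcal{H}_f$ has the form $J_{f'}$ for some $f'\colon V(G)\to\Z^+$ with $f(v)\le f'(v)\le f(v)+1$ for all $v$: put $f'(v)=f(v)+1$ at the vertices where $f(v)$ is even and $H(v)=J_f^+(v)$, and $f'(v)=f(v)$ otherwise. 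Checking that $J_{f'(v)}=H(v)$ is a short case analysis on the parity of $f(v)$ and the value of $H(v)$; when $f(v)$ is odd the sets $J_f(v)$ and $J_f^+(v)$ coincide, so the only genuine adjustment happens at even $f(v)$ with $H(v)=J_f^+(v)$, where $J_{f(v)+1}=J_f^+(v)$.

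To prove that the factor condition implies the Tutte's condition, I would fix an arbitrary coloring $g\colon V(G)\to\{B,R\}$ and set $H'(v)=J_f(v)$ if $g(v)=R$ and $H'(v)=J_f^+(v)$ if $g(v)=B$, so that $H'\in\mathcal{H}_f$. By hypothesis $G$ has an $H'$-factor $F$; then $\deg_F(v)\in J_f(v)$ on the red vertices and $\deg_F(v)\in J_f^+(v)\subseteq 2\N+1$ on the blue vertices, so $F$ is exactly the factor that \cref{thm:rTutte} requires for $g$. Since $g$ was arbitrary, \cref{thm:rTutte} gives that $G$ satisfies the Tutte's condition with respect to $f$.

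For the reverse implication I would fix $H\in\mathcal{H}_f$ and let $f'$ be the function attached to $H$ above, so that $J_{f'}=H$ and $f'(v)\ge f(v)$ for all $v$. Then $o(G-S)\le f(S)\le f'(S)$ for every nonempty $S\subset V(G)$, so $G$ also satisfies the Tutte's condition with respect to $f'$. Applying \cref{thm:rTutte} to $f'$ with the constant coloring $g\equiv R$ shows that $G$ contains a $J_{f'}$-factor, that is, an $H$-factor; as $H$ ranges over $\mathcal{H}_f$, this finishes the proof.

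I do not anticipate a real obstacle: once the dictionary between $\mathcal{H}_f$ and the colorings of \cref{thm:rTutte} is in place, each direction reduces to a single application of that theorem, and the only point needing attention is the routine identity $J_{f'}=H$. For completeness I note that the implication ``factor condition $\Rightarrow$ Tutte's condition'' can also be proved directly, without \cref{thm:rTutte}: if the Tutte's condition failed for some nonempty $S$, one would choose $H\in\mathcal{H}_f$ equal to $J_f^+$ (a set of odd integers) on every odd component of $G-S$ and equal to $J_f$ on $S$; in any $H$-factor $F$ the degree sum over an odd component of $G-S$ is odd, so each such component sends at least one edge of $F$ to $S$, forcing more than $f(S)$ edges of $F$ to join $S$ to $V(G)\setminus S$, hence $\sum_{v\in S}\deg_F(v)>f(S)$, contradicting $\deg_F(v)\le f(v)$ for every $v\in S$.
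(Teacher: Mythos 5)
Your proof is correct, and one half of it diverges from the paper's in an interesting way. For the direction ``Tutte's condition $\Rightarrow$ $H$-factors'' you do exactly what the paper does: replace $H$ by $J_{f'}$ for the pointwise maximum $f'\ge f$, observe that the Tutte's condition is inherited by $f'$, and invoke \cref{thm:rTutte} with the all-red coloring. For the converse, however, the paper argues directly: it picks the specific $H\in\mathcal{H}_f$ equal to $J_f$ on $S$ and $J_f^+$ off $S$, and runs the parity count showing each odd component of $G-S$ sends an $F$-edge into $S$, whence $o(G-S)\le\sum_{C}|E_F(V(C),S)|\le f(S)$. You instead deduce this direction from the ``if'' part of \cref{thm:rTutte}, via the observation that $J_f^+(v)\subseteq 2\N+1$, so that for any coloring $g$ the factor attached to the member of $\mathcal{H}_f$ mimicking $g$ is a fortiori the factor \cref{thm:rTutte} demands. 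Your reduction is cleaner in that it makes both directions one-line consequences of \cref{thm:rTutte} and exposes the logical relationship between $\mathcal{H}_f$ and the colorings; the paper's direct argument has the advantage of being self-contained for that direction (it is the standard parity argument one would need anyway to prove \cref{thm:rTutte} itself). Your closing remark reproduces essentially the paper's direct argument, so you have in effect given both proofs; the only cosmetic gap there is that you leave $H$ unspecified on the even components of $G-S$, which is harmless since any choice in $\{J_f,J_f^+\}$ works.
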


\begin{proof}
Let $G$ be a connected general graph of even order, with a function $f\colon V(G)\to\Z^+$.
We shall show the necessity and sufficiency respectively.

\noindent{\bf Necessity.}
Let $H\in \mathcal{H}_f$.
Consider the function $f'\colon V(G)\to \Z^+$ defined by
 \[
f'(v)=\max_{x\in H(v)}x=\begin{cases}
f(v)+1,&\text{if $H(v)=J^+_f(v)$ and $f(v)$ is even};\\[4pt]
f(v),&\text{otherwise}.
\end{cases}
\]
From the premise, we infer immediately
\[
o(G-S)\le f(S)\le f'(S)\qquad\text{for any set $S\subset V(G)$}.
\]
Applying Theorem \ref{thm:rTutte} with the coloring $g$ such that $g^{-1}(R)=V(G)$,
one obtains that $G$ contains an $J_{f'}$-factor, i.e., an $H$-factor.

\noindent{\bf Sufficiency.}
Let $S\subset V(G)$.
Consider the function $H\in\mathcal{H}_f$ defined by
\[
H(v)=\begin{cases}
J_f(v),&\text{if $v\in S$};\\[4pt]
J^+_f(v), &\text{otherwise}.
\end{cases}
\]
From premise, the graph $G$ has an $H$-factor, say, $F$.
Let $C$ be any odd component of the subgraph $G-S$.
Then for each $v\in C$, we have $H(v)=J_f^+(v)$
and thus the degree $d_F(v)$ is odd.
By parity argument, we have $E_F(V(C),\,S)\neq \emptyset$.
Therefore, one may deduce that
\begin{align*}
o(G-S)\le \sum_{C}|E_F(V(C),S)|\le f(S).
\end{align*}
This completes the proof.
\end{proof}

We remark that \cref{thm:rTutte:new} reduces to \cref{thm:CK} if $f(V(G))\subseteq 2\N+1$.
In fact, when $f(V(G))\subseteq 2\N+1$,
we obtain $J_f=J_f^+$ and
% the set $\mathcal{H}_f$ reduces to
\[
\mathcal{H}_f=\bigl\{H\colon V(G)\to 2^\N\ |\ H(v)=J_f(v)\text{ for each $v\in V(G)$}\bigr\}=\{J_f\}.
\]
%As a consequence, one may obtain \cref{thm:CK} from \cref{thm:rTutte:new}.

\bigskip

\begin{thm}[Lu and Wang~\cite{LW17}]\label{thm:Tutte}
Let $G$ be a connected general graph.
Then $G$ satisfies the Tutte's condition with respect to a function $f\colon V(G)\to\Z^+$ 
if and only if
for any coloring $g\colon V(G)\rightarrow \{B,R\}$,
the graph $G$ either contains an $H$-factor or is $H$-critical, where 
\[
H(v)=\begin{cases}
J_f(v),&\text{if $g(v)=R$};\\[5pt]
2\N+1,&\text{if $g(v)=B$}.
\end{cases}
\]
%\[
%o(G-S)\le f(S)\qquad\text{for all non-empty vertex subsets $S$},
%\]
\end{thm}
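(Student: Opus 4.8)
The plan is to mimic the structure of the proof of \cref{thm:rTutte:new}, replacing the "even order" tool \cref{thm:rTutte} with its "any order" counterpart \cref{thm:Tutte} as the black box; but since \cref{thm:Tutte} is itself the statement to be proved, I instead intend to deduce it from \cref{thm:rTutte:new} together with a parity/order-reduction argument, or alternatively to prove necessity and sufficiency directly in the spirit of the previous proof. For \textbf{necessity}, assume $G$ satisfies the Tutte's condition with respect to $f$ and fix a coloring $g$; I want to produce an $H$-factor or show $G$ is $H$-critical. If $G$ has even order, one argues exactly as in the necessity part above: pass to the function $f'(v)=\max_{x\in H(v)}x$ when $g(v)=R$ and note that for $g(v)=B$ the set $2\N+1$ already contains $J_{f'}(v)$ for any reasonable $f'$, so the Tutte's condition persists for $f'$ and an $H$-factor exists. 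If $G$ has odd order, no $H$-factor of the required parity type can exist globally (a parity obstruction, since all the target sets are subsets of $2\N+1$ or $J_f$ which consist of odd integers together with possibly one even value — one must check the total degree parity), so the claim becomes that $G$ is $H$-critical, i.e.\ that $G^x$ has an $H^x$-factor for every $x$. Here the key step is to verify that $G^x$, which has even order, still satisfies a Tutte-type condition with respect to the modified function, and then invoke the even-order result on $G^x$.

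For \textbf{sufficiency}, fix a nonempty $S\subset V(G)$ and choose, just as in the proof of \cref{thm:rTutte:new}, the coloring $g$ with $g^{-1}(R)=S$ and $g^{-1}(B)=V(G)\setminus S$, so that $H(v)=J_f(v)$ on $S$ and $H(v)=2\N+1$ off $S$. By hypothesis $G$ either has an $H$-factor $F$ or is $H$-critical. In the first case, every vertex $v$ of an odd component $C$ of $G-S$ has $\deg_F(v)\in 2\N+1$ odd, so a parity count forces $E_F(V(C),S)\neq\emptyset$, and summing over the odd components $C$ gives $o(G-S)\le\sum_C|E_F(V(C),S)|\le \sum_{v\in S}\deg_F(v)$; the last quantity is at most $f(S)$ because $\deg_F(v)\in J_f(v)$ for $v\in S$ means $\deg_F(v)\le f(v)$. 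In the second case, where $G$ is $H$-critical, apply $H$-criticality at a well-chosen vertex: for any $x\in S$, the graph $G^x$ has an $H^x$-factor $F'$; then $G^x-S$ has the same odd components as $G-S$ except that the component containing $x'$... — more carefully, one considers $G^x - S$ and runs the same parity argument there, using that $\deg_{F'}(x')=1$ and $x x'$ is the unique edge at $x'$, to again conclude $o(G-S)\le f(S)$. One should double-check the edge case where $G-S$ has an odd component that is a single vertex, and the case $S=V(G)$.

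The main obstacle I anticipate is the odd-order/criticality bookkeeping in the necessity direction: proving that when $|V(G)|$ is odd there is genuinely \emph{no} $H$-factor (so that the disjunction must land on "$H$-critical"), and then showing that $G^x$ inherits the right Tutte-type inequality so that the even-order theorem applies to it. The parity obstruction is not completely automatic because the sets $J_f^+(v)$ (hidden inside $2\N+1$) and $J_f(v)$ can contain an even integer $f(v)$, so "$\sum_v \deg_F(v)$ is even" does not instantly contradict "$|V(G)|$ is odd"; one has to argue via the structure of $H$ on the blue set, perhaps by contracting or by invoking \cref{thm:rTutte:new} on $G^x$ and translating the $H^x$-factor back. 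A secondary, more routine obstacle is handling general (multi)graphs with loops and multiple edges in the parity argument $E_F(V(C),S)\neq\emptyset$, but this is exactly the same subtlety already handled in the proof of \cref{thm:rTutte:new}, so I would simply reuse that reasoning verbatim.
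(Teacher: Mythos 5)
First, note that the paper does not prove \cref{thm:Tutte} at all: it is quoted as a known result from~\cite{LW17}, so there is no in-paper proof to compare your attempt against; the closest analogue in this paper is the omitted proof of \cref{thm:Tutte:new}, which \emph{uses} \cref{thm:Tutte} as a black box rather than proving it. Your sufficiency argument is essentially correct and is exactly the parity argument from the proof of \cref{thm:rTutte:new}: colour $S$ red and the rest blue, and in the critical case attach $x'$ to a vertex $x\in S$ and count edges leaving the odd components of $G-S$, using $\deg_{F'}(x')=1$. That half is fine.

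The necessity half has a genuine gap. The claimed parity obstruction for odd order is false: take $G=K_3$ and $f\equiv 2$ with every vertex red, so $H(v)=J_2=\{1,2\}$; then $G$ satisfies the Tutte's condition and the triangle itself is an $H$-factor, even though $|V(G)|$ is odd. So the proof cannot branch on ``odd order implies no $H$-factor, hence show criticality.'' Worse, the intended repair --- show that $G^x$ inherits a Tutte-type condition for the extended function $f^x$ with $f^x(x')=1$ and apply \cref{thm:rTutte} to the even-order graph $G^x$ --- does not go through: for a set $T$ with $x\in T$ the new vertex $x'$ becomes an isolated odd component of $G^x-T$, and for $x\notin T$ it can flip the parity of the component of $G-T$ containing $x$; either way $o(G^x-T)$ can equal $o(G-T)+1$, while the hypothesis only gives $o(G-T)\le f(T)$ with no slack. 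Exactly in these tight cases one must instead show that $G$ itself has an $H$-factor, and establishing this dichotomy is the real content of the theorem; it requires the deficiency/structure theory for degree-prescribed factors (as in \cite{Lov72,Cor88}) or the full argument of \cite{LW17}, none of which your sketch supplies. You correctly flag this as the main obstacle, but flagging it does not close it.
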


By a proof similar to that of Theorem \ref{thm:rTutte:new}, 
one may obtain the following result.
\begin{thm}\label{thm:Tutte:new}
Let $G$ be a connected general graph of odd order.
Then $G$ satisfies the Tutte's condition with a function $f\colon V(G)\to\Z^+$ if and only if
the graph $G$ either contains an $H$-factor or is $H$-critical, for any $H\in \mathcal{H}_f$.
\end{thm}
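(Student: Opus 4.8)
The plan is to mimic the proof of \cref{thm:rTutte:new}, using \cref{thm:Tutte} in place of \cref{thm:rTutte} and accounting for the odd-order adjustment via the $H$-critical alternative. For the necessity direction, suppose $G$ satisfies the Tutte's condition with respect to $f$, and fix an arbitrary $H\in\mathcal{H}_f$. As before, I would introduce the function $f'\colon V(G)\to\Z^+$ with $f'(v)=\max_{x\in H(v)}x$, so that $f'(v)\in\{f(v),f(v)+1\}$ and hence $o(G-S)\le f(S)\le f'(S)$ for every $S\subset V(G)$; thus $G$ satisfies the Tutte's condition with respect to $f'$ as well. Now I would apply \cref{thm:Tutte} with the monochromatic coloring $g^{-1}(R)=V(G)$: it yields that $G$ either contains a $J_{f'}$-factor or is $J_{f'}$-critical. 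Since $J_{f'}=H$ (the odd parts of $H$ are unchanged, and the top value has been absorbed into $f'$), this is exactly the statement that $G$ either contains an $H$-factor or is $H$-critical.

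For the sufficiency direction, suppose that for every $H\in\mathcal{H}_f$ the graph $G$ either has an $H$-factor or is $H$-critical, and let $S\subset V(G)$ be arbitrary; I must show $o(G-S)\le f(S)$. As in the proof of \cref{thm:rTutte:new} I would take the specific $H\in\mathcal{H}_f$ with $H(v)=J_f(v)$ for $v\in S$ and $H(v)=J_f^+(v)$ otherwise. The subtlety is that $G$ might be $H$-critical rather than having an $H$-factor, so I cannot directly run the parity/edge-counting argument on an $H$-factor of $G$ itself. Instead, pick any vertex $x\in S$ (note $S$ is nonempty by the definition of the Tutte's condition) — or, if $S$ were empty the inequality is vacuous — and pass to the graph $G^x$, which by the definition of $H$-criticality (combined with the $H$-factor case) carries an $H^x$-factor $F$. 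Then one argues on $F$: for each odd component $C$ of $G-S$, every $v\in C$ has $H^x(v)=H(v)=J_f^+(v)$, so $d_F(v)$ is odd; the new pendant edge $xx'$ lies at $x\in S$ and does not touch any such $C$, so the parity argument still gives $E_F(V(C),S)\ne\emptyset$, and therefore $o(G-S)\le\sum_C|E_F(V(C),S)|\le d_F(S)$. The only remaining point is to bound $d_F(S)$ by $f(S)$: for $v\in S$ we have $H(v)=J_f(v)$, whose maximum is $f(v)$, except that the extra edge $xx'$ contributes $1$ to $d_F(x)$ when $f(x)$ is even — handled by noting that in the even case $\max J_f(x)=f(x)$ already counts a "reserved" slot, or more carefully by choosing $H(x)=J_f^+(x)$ when convenient; I would pick the bookkeeping that makes $d_F(S)\le f(S)$ come out cleanly.

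The main obstacle I anticipate is precisely this last bit of bookkeeping around the pendant vertex $x'$ in the odd-order case: one must be sure that adding the edge $xx'$ does not push $d_F(x)$ above $f(x)$, and that the choice of whether $H(x)$ is $J_f(x)$ or $J_f^+(x)$ (and whether $f(x)$ is even or odd) is compatible with both the parity argument on the odd components and the final sum bound. A clean way around it is to observe that since $|V(G)|$ is odd and $x'$ is a single added vertex, the relevant component of $G^x-S$ containing $x'$ is itself controlled, and to set up $H$ so that the vertex $x\in S$ we distinguish is one where choosing $J_f^+$ (hence allowing degree up to $f(x)+1$ in $G^x$, i.e. up to $f(x)$ in $G$ plus the pendant edge) keeps everything consistent; alternatively one splits into the cases $f(x)$ odd and $f(x)$ even and verifies the count in each. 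Apart from this, the argument is a routine transcription of the even-order proof, so I would keep the write-up short, emphasizing only the passage to $G^x$ and the verification that the pendant edge does not interfere with the odd components of $G-S$.
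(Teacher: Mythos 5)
Your proposal is correct and follows exactly the route the paper intends: the paper omits this proof, saying only that it is ``similar to that of \cref{thm:rTutte:new}'', and your transcription (necessity via \cref{thm:Tutte} with the all-red coloring and $f'=\max H$, sufficiency via passing to $G^x$ for some $x\in S$ in the $H$-critical case) is precisely that argument. The bookkeeping you worry about at the end is a non-issue: an $H^x$-factor $F$ of $G^x$ satisfies $d_F(x)\in H^x(x)=H(x)=J_f(x)$, where the degree is computed in $G^x$ and hence already includes the pendant edge $xx'$, so $d_F(x)\le f(x)$ holds automatically and no parity case split or switch to $J_f^+(x)$ is needed.
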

\begin{proof}
Omitted.
\end{proof}

Combining \cref{thm:rTutte:new,thm:Tutte:new} gives an answer to  \cref{prob:AK}.

\end{document}